\documentclass[12pt]{amsart}

\usepackage[utf8]{inputenc}

\usepackage{amsfonts}
\usepackage{amsmath}
\usepackage{amssymb}
\usepackage{mathtools}
\usepackage{tabularx}

\usepackage{geometry} 
\usepackage{pxfonts}
\usepackage{euscript}
\usepackage{bbold,bbm}
\usepackage{comment}
\usepackage{scalerel}
\usepackage{xcolor}
\usepackage{tikz-cd}

\usepackage{hyperref}
\usepackage{graphics}
\usepackage{epstopdf} 
\usepackage[all,2cell,arrow,matrix,tips]{xy} \UseAllTwocells \SilentMatrices
\usepackage{graphicx}
\usepackage{verbatim}
\usepackage{leftidx}
\usepackage{enumerate}
\usepackage{phonetic}
\usepackage{lipsum}
\usepackage{subfigure}
\usepackage{float}

\usepackage[backend=biber]{biblatex}
\theoremstyle{definition}
\newtheorem{defn}{Definition}[section]
\newtheorem{thm}{Theorem}[section]

\newtheorem{lem}[thm]{Lemma}

\newcommand\Cb {\mathbb{C}}

\newcommand\CH {\EuScript{H}}

\newcommand\CK {\EuScript{K}}

\newcommand{\Cla} {\mathcal{A}}
\newcommand{\Clb} {\mathcal{B}}

\newcommand{\Clj} {\mathcal{J}}

\newcommand{\Cll} {\mathcal{L}}

\newcommand{\Alg}{\operatorname{Alg}}
\newcommand{\Lat}{\operatorname{Lat}}

\DeclareMathOperator{\Span}{Span}

\DeclarePairedDelimiterX\braket[2]{\langle}{\rangle}{#1 \delimsize\vert #2}

\begin{document}

\title{Non-Existence of Abelian KS-Algebras in $M_n(\Cb)$ for $n \geq 3$}

\author{Wei Yuan}
\address{Institute of Mathematics, Academy of Mathematics and Systems Science, Chinese Academy of Sciences, Beijing, 100190, China; School of Mathematical Sciences, University of Chinese Academy of Sciences, Beijing 100049, China.}
\email{wyuan@math.ac.cn}

\date{}

\begin{abstract}
We prove that $M_n(\Cb)$ contains no abelian Kadison--Singer algebra for
any $n \geq 3$. 
\end{abstract}

\subjclass[2020]{Primary 47L75; Secondary 15A30.}
\keywords{Abelian KS-algebras, reflexives algebras} 

\maketitle

\section{Introduction}
Let $\Clb(\CH)$ denote the algebra of bounded linear operators on a complex Hilbert space $\CH$. For a subalgebra $\Cla \subseteq \Clb(\CH)$, let $\Lat\Cla$ denote the lattice of its closed invariant subspaces, and let $\Alg\Lat\Cla$ denote the algebra of all bounded linear operators that leave every subspace in $\Lat\Cla$ invariant. The algebra $\Cla$ is called reflexive if $\Cla = \Alg\Lat\Cla$.

In 2010, the notion of Kadison--Singer algebras was introduced by Ge and Yuan to synthesize ideas from triangular operator algebras, reflexive algebras, and von Neumann algebras \cite{GY1}.

\begin{defn}[\cite{GY1}]
A subalgebra $\Cla \subseteq \Clb(\CH)$ is a \emph{Kadison--Singer algebra} (or \emph{KS-algebra}) if it is reflexive and maximal with respect to inclusion among reflexive algebras sharing the same diagonal $\Cla \cap \Cla^*$, where $\Cla^* = \{A^* : A \in \Cla\}$. Explicitly, if $\Clb \subseteq \Clb(\CH)$ is any reflexive subalgebra satisfying
\begin{align*}
  \Cla \subseteq \Clb
  \quad\text{and}\quad
  \Clb \cap \Clb^* = \Cla \cap \Cla^*,
\end{align*}
then $\Clb = \Cla$.
\end{defn}

In \cite{Yuan14}, Wu and Yuan initiated the study of abelian KS-algebras and posed the question of whether examples exist beyond the known one-parameter family in $M_2(\Cb)$:
\begin{align*}
  \left\{
    \begin{pmatrix}
      a & t(b-a) \\
      0 & b
    \end{pmatrix}
    : a, b \in \Cb
  \right\}, \quad t > 0.
\end{align*}
They proved that, for $n \ge 3$, an abelian KS-algebra in $M_n(\Cb)$ contains no non-trivial idempotents, and consequently must be of the form $\Cb I + \Clj$, where $I$ is the identity matrix and $\Clj$ is a nilpotent ideal \cite[Corollaries~2.3--2.4]{Yuan14}. Using this characterization, they ruled out the existence of such algebras generated by a single matrix. 

The purpose of this short note is to remove this restriction on the number of generators and answer the question in the negative for all dimensions $n \ge 3$. More explicitly, we establish the following theorem:

\begin{thm} \label{thm:no_ab_KS_3}
For $n \ge 3$, there exists no abelian Kadison--Singer algebra in $M_n(\Cb)$.
\end{thm}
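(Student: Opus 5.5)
By \cite[Corollaries~2.3--2.4]{Yuan14}, I will suppose for contradiction that $\Cla=\Cb I+\Clj\subseteq M_n(\Cb)$ is an abelian KS-algebra with $n\ge 3$ and $\Clj$ a nilpotent ideal, and then construct a strictly larger reflexive algebra with the same diagonal.

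\textbf{Step 1: identify the diagonal.} The diagonal is $\Cla\cap\Cla^*=\Cb I$. Indeed, a self-adjoint element of $\Cla$ has the form $aI+N$ with $N\in\Clj$ nilpotent. Then $N=(aI+N)-aI$ is normal, and a normal nilpotent matrix is $0$.

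\textbf{Step 2: translate the problem into lattices.} For any family $\mathcal S$ of subspaces,
\[
\Alg\mathcal S\cap(\Alg\mathcal S)^*=\{P_M: M\in\mathcal S\}'.
\]
Hence a reflexive $\Clb\supseteq\Cla$ has diagonal $\Cb I$ exactly when $\Clb=\Alg\mathcal S$ for some $\mathcal S\subseteq\Lat\Cla$ whose projections generate an irreducible $*$-algebra, i.e. have commutant $\Cb I$. We may take $\mathcal S\subseteq\Lat\Cla$ because $\Clb\supseteq\Cla$ forces $\Lat\Clb\subseteq\Lat\Cla$. The goal is therefore to find such an irreducible $\mathcal S\subseteq\Lat\Cla$ together with an operator $T\in\Alg\mathcal S\setminus\Cla$. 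This contradicts maximality.

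\textbf{Step 3: describe $\Lat\Cla$ and choose $T$.} Let $K=\bigcap_{N\in\Clj}\ker N$ and $R=\Span\{N\CH: N\in\Clj\}$. Both are nonzero and proper, because $\Clj$ is nilpotent and $\Clj\neq 0$; the case $\Clj=0$ is excluded since $\Cb I$ is not KS for $n\ge2$. Every element of $\Cla$ acts as a scalar on $K$ and as a scalar modulo $R$. Consequently every subspace of $K$ and every subspace containing $R$ lies in $\Lat\Cla$, together with lattice combinations of such subspaces with the other invariant subspaces.

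I will take $T$ to be a rank-one operator $x\otimes y^*:u\mapsto\langle u,y\rangle x$ with $x\in K$. The operator $T$ leaves $M$ invariant if and only if $x\in M$ or $y\perp M$. Since $n\ge 3$, it is not a scalar; it lies in $\Cla$ only if it lies in $\Clj$. Rank-one elements of $\Cla$ are severely restricted by commutativity, since they must commute with all of $\Clj$. So I expect to be able to choose $x\in K$ and $y$ with $T\notin\Cla$, for instance with $y$ not orthogonal to $x$ so that $T$ is not nilpotent.

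\textbf{Step 4: build $\mathcal S$ (the main obstacle).} I must exhibit an irreducible family $\mathcal S\subseteq\Lat\Cla$ in which every member either contains $x$ or is orthogonal to $y$. The natural candidates are:
\begin{itemize}
\item lines and planes inside $K$ through $x$;
\item subspaces containing $R+\Cb x$;
\item subspaces inside $y^\perp\cap K$ or containing $R$ inside $y^\perp$.
\end{itemize}
Irreducibility of $\mathcal S$ follows from irreducibility of $\Lat\Cla$ (Step 1) if I can show the added constraint costs nothing. I would argue as follows. If some operator $X$ commutes with all $P_M$ for $M\in\mathcal S$, then using the richness of subspaces of $K$ and of superspaces of $R$, $X$ commutes with all of $\Lat\Cla$, and so $X$ is a scalar.

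This is where a case split on $\dim K$, $\dim R$, and the position of $K$ relative to $R$ is needed. The tight case is $\dim K=1$ or $\operatorname{codim}R=1$, where the free families degenerate. There I would first try replacing $T$ by its adjoint-type analogue $x'\otimes y'^*$ with $y'\perp R$, working on the dual side. Making this case analysis airtight is the step I expect to require the most work.
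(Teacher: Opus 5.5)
\textbf{There is a genuine gap: Step 4 is the whole theorem, and it is not carried out. The argument you sketch for it does not work.}

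Steps 1--3 are sound. The trouble is with the families you list in Step 4. Every one of them consists either of subspaces of $K$ or of subspaces containing $R$ (or $R+\Cb x$). Such families are in general reducible. Whenever $K\subseteq R$, the projection $P_K$ commutes with the projection onto every subspace of $K$ and onto every subspace containing $R$. This case really occurs for reflexive algebras of the required form: for example $\Cla=\Cb I+\Cb N$ with $N=J_2\oplus J_2\in M_4(\Cb)$ has $K=R$.

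So irreducibility has to come from the \emph{middle} members of $\Lat\Cla$, those neither inside $K$ nor containing $R$. You would have to show that enough of them survive the constraint ``contains $x$ or is orthogonal to $y$''. The sentence ``$X$ commutes with all of $\Lat\Cla$'' restates that claim rather than proving it. Irreducibility of $\Lat\Cla$ is not inherited by subfamilies, and the subspaces you discard may be exactly the ones responsible for it.

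Any such argument must use reflexivity of $\Cla$ in an essential way; for the non-reflexive $\Cb I+\Cb J_n$ the lattice is a chain. Your proposal uses reflexivity only through the global fact $(\Lat\Cla)'=\Cb I$.

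Your case split is also aimed at the wrong place, because $\dim K=1$ cannot occur. Every nonzero $M\in\Lat\Cla$ meets $K$ (take the last nonzero $\Clj^kM$). So if $K=\Cb x$, then every such $M$ contains $x$. That gives $x\otimes x^*\in\Alg\Lat\Cla=\Cla$, which is absurd. Dually, $\dim R^\perp\ge 2$.

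\textbf{The missing idea, as the paper supplies it.} The paper avoids the pruning by choosing the new operator to \emph{annihilate} $R=\Clj\Cb^n$. Then every $\Cla$-invariant subspace inside $R$ is kept. Let $P$ be the projection onto $R$. Lemma~\ref{lem:con_cut} shows, using reflexivity, that these subspaces already have commutant $\Cb P$ on $R$:
\begin{itemize}
\item If an operator $T$ on $R$ leaves them invariant, then $TJ\xi\in\Cla\xi$ for all $J\in\Clj$ and all $\xi$. Hence $TJ\in\Alg\Lat\Cla=\Cla$, and $T$ commutes with $P\Cla P$.
\item A projection $0\ne Q\ne P$ commuting with them is impossible. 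Choose $\Clj$-null vectors $\beta_1\in Q\Cb^n$ and $\beta_2\in(P-Q)\Cb^n$. Then $\Cb(\beta_1+\beta_2)$ is an invariant line that $Q$ does not reduce.
\end{itemize}

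Adding all subspaces containing $R$ leaves the commutant $\{aP+b(I-P)\}$. One further cyclic subspace $\Cla\xi$ with $P\xi\notin\Cla\xi$ forces $a=b$; such a $\xi$ exists because $P\notin\Cla=\Alg\Lat\Cla$.

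The operator outside $\Cla$ is the idempotent $B+(I-P)$, where $B=PB(I-P)$ and $B(I-P)\xi=P\xi$. It kills $R$ and preserves every subspace containing $R$. It also fixes $\xi$ while killing $\Clj\xi$, so it preserves $\Cla\xi$.

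By contrast, take a rank-one $x\otimes y^*$ with $x\in K\cap R$ and $\langle x,y\rangle\ne 0$, which is forced when $K\subseteq R$. It cannot kill $R$, and that is exactly why you are pushed into the pruned family.
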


\subsection*{Declaration on the Use of AI and Waiver of Credit}
An early version of the proof was developed with the assistance of the large language model GPT-6. The author framed the intermediate lemmas, directed the reasoning steps through interactive prompting, and carefully checked and revised the final arguments. The author assumes full responsibility for the mathematical correctness of the text.

This preprint is circulated solely to share these findings with the mathematical community, with no intention of claiming priority. First, the primary conceptual groundwork was established in \cite{Yuan14}, making the author’s contribution essentially dispensable; second, the interactive discovery process felt largely akin to a lottery.

\section{Proof of Theorem \ref{thm:no_ab_KS_3}}

Throughout this section, closed subspaces of a Hilbert space $\CH$ are canonically identified with the orthogonal projections onto them. For an unital subalgebra $\Cla$ of $\Clb(\CH)$ and $\xi \in \CH$, let
\begin{align*}
 \Cla \xi := \Span \{A \xi : A \in \Cla\},
\end{align*}
and let $\overline{\Cla\xi}$ denote its closure in $\CH$. In particular, $\overline{\Cla \xi} = \Cla \xi$ whenever $\dim \CH < \infty$.\\

The following description of $\Alg \Lat \Cla$ is well-known. We include the proof for the sake of completeness.

\begin{lem}\label{lem:local_ref}
Let $\Cla$ be a unital subalgebra of $\Clb(\CH)$ and $T \in \Clb(\CH)$. Then 
\begin{align*}
  T \in \Alg \Lat \Cla
\end{align*}
if and only if 
\begin{align*}
  T \xi \in \overline{\Cla \xi} \quad \text{for all } \xi \in \CH.
\end{align*}
\end{lem}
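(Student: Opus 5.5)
The plan is to prove both implications directly, using the cyclic subspaces $\overline{\Cla\xi}$ as the key members of $\Lat\Cla$.

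\emph{Forward direction.} Suppose $T \in \Alg\Lat\Cla$ and fix $\xi \in \CH$. The first step is to check that $\overline{\Cla\xi} \in \Lat\Cla$.
\begin{itemize}
\item The space $\Cla\xi$ is a linear subspace, and $B(A\xi) = (BA)\xi \in \Cla\xi$ for all $A, B \in \Cla$, since $\Cla$ is an algebra. So $\Cla\xi$ is invariant under each $B \in \Cla$.
\item Each $B$ is bounded, hence continuous, so invariance passes to the closure: $B\,\overline{\Cla\xi} \subseteq \overline{B\Cla\xi} \subseteq \overline{\Cla\xi}$.
\end{itemize}
Thus $\overline{\Cla\xi}$ is a closed invariant subspace, and $T$ leaves it invariant. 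Since $\Cla$ is unital, $\xi = I\xi \in \Cla\xi$, and therefore $T\xi \in T\,\overline{\Cla\xi} \subseteq \overline{\Cla\xi}$.

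\emph{Reverse direction.} Suppose $T\xi \in \overline{\Cla\xi}$ for every $\xi \in \CH$. Let $\Cll \in \Lat\Cla$ and $\xi \in \Cll$. Then $A\xi \in \Cll$ for all $A \in \Cla$, so $\Cla\xi \subseteq \Cll$. Because $\Cll$ is closed, $\overline{\Cla\xi} \subseteq \Cll$. Hence $T\xi \in \Cll$, so $T$ leaves $\Cll$ invariant. As $\Cll$ was arbitrary, $T \in \Alg\Lat\Cla$.

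There is no real obstacle here. The only points needing care are the use of unitality to guarantee $\xi \in \overline{\Cla\xi}$, and the use of continuity of operators in $\Cla$ to pass invariance from $\Cla\xi$ to its closure.
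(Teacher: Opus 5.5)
Your proof is correct and follows the paper's argument. The forward direction uses $\overline{\Cla\xi}\in\Lat\Cla$ together with unitality to get $\xi\in\overline{\Cla\xi}$, and the reverse direction uses $\overline{\Cla\xi}\subseteq\CH_0$ for every $\CH_0\in\Lat\Cla$ containing $\xi$; you also justify, via continuity, why the closure is invariant, which the paper leaves implicit.
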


\begin{proof}
For any $\xi \in \CH$, the closed subspace $\overline{\Cla \xi}$ is invariant under $\Cla$, so it belongs to $\Lat \Cla$. Consequently, if $T \in \Alg \Lat \Cla$, then $\overline{\Cla \xi}$ is invariant under $T$. Because $\Cla$ is unital, $\xi \in \overline{\Cla \xi}$, which implies $T\xi \in \overline{\Cla \xi}$.

Conversely, assume that $T\xi \in \overline{\Cla \xi}$ for every $\xi \in \CH$. Let $\CH_0 \in \Lat \Cla$ and choose any $\beta \in \CH_0$. Since $\CH_0$ is invariant under $\Cla$, we have $\overline{\Cla \beta} \subseteq \CH_0$. Thus,
\begin{align*}
  T\beta \in \overline{\Cla \beta} \subseteq \CH_0,
\end{align*}
which shows that $\CH_0$ is invariant under $T$. Thus $T \in \Alg \Lat \Cla$.
\end{proof}

\begin{lem}\label{lem:con_cut}
Suppose that $\Cla = \Cb I + \Clj \subseteq M_n(\Cb)$, $n \geq 2$, is a commutative reflexive algebra, where $\Clj$ is an ideal of $\Cla$ satisfying $\Clj^m = \{0\}$ for some integer $m \ge 1$. Define the subspace
\begin{align*}
  \CK := \Span \{A \xi : A \in \Clj, \, \xi \in \Cb^n\} \subseteq \Cb^n.
\end{align*}
Let $P \in M_n(\Cb)$ be the orthogonal projection onto $\CK$. Treating $P \Cla P$ as a unital subalgebra of $P M_n(\Cb) P$, we have
\begin{align*}
  \Alg\Lat(P \Cla P) \subseteq (P \Cla P)' \cap P M_n(\Cb) P
\end{align*}
and
\begin{align*}
  (\Lat P \Cla P)' \cap P M_n(\Cb) P = \Cb P.
\end{align*}
\end{lem}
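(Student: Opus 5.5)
The plan is to exploit one observation: $\CK$ is invariant under $\Cla$, because $B C\xi \in \Clj\Cb^n$ for $B\in\Cla$, $C\in\Clj$. Hence $P\Cla P = \Cla P$ and $BP\cdot B'P = BB'P$, so $P\Cla P$ is a commutative unital algebra acting on $\CK$. Combined with Lemma~\ref{lem:local_ref}, this lets me lift operators on $\CK$ back into $\Cla$.

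\emph{First inclusion.} Let $T \in \Alg\Lat(P\Cla P)$, regarded as an operator on $\CK$, and fix $C\in\Clj$. Consider $X := TC \in M_n(\Cb)$; this makes sense since $C$ maps $\Cb^n$ into $\CK$. For every $\xi\in\Cb^n$, Lemma~\ref{lem:local_ref} applied to $P\Cla P$ and the vector $C\xi\in\CK$ gives
\begin{align*}
  X\xi = T(C\xi) \in (P\Cla P)C\xi = \Cla C\xi \subseteq \Cla\xi .
\end{align*}
By Lemma~\ref{lem:local_ref} again, now for $\Cla$, we get $X \in \Alg\Lat\Cla = \Cla$, using reflexivity. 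Hence $X$ commutes with every $B\in\Cla$. Since $\Cla$ is commutative,
\begin{align*}
  BTC = TCB = TBC .
\end{align*}
Thus $BT\eta = TB\eta$ for all $\eta = C\xi$. Such vectors span $\CK$, so $T$ commutes with $BP$ on $\CK$, which gives the first inclusion. I record a by-product: for every $T\in\Alg\Lat(P\Cla P)$ and every $C\in\Clj$, we have $TC\in\Cla$.

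\emph{Second identity.} Let $T\in PM_n(\Cb)P$ commute with every projection in $\Lat(P\Cla P)$. Then $T^*$ does too, so splitting $T$ into its hermitian parts I may assume $T$ is self-adjoint. It then suffices to show that every spectral projection $E\le P$ of $T$ is $0$ or $P$, and each such $E$ also commutes with $\Lat(P\Cla P)$. Such an $E$ leaves every invariant subspace invariant, so $E\in\Alg\Lat(P\Cla P)$. By the first part, $E$ commutes with $P\Cla P$, so both $E\CK$ and $(P-E)\CK$ are invariant; that is, $E$ reduces $P\Cla P$. By the by-product, $EC\in\Cla$ and $(P-E)C\in\Cla$ for all $C\in\Clj$. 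These operators are nilpotent, since $E$ commutes with $C|_\CK$ gives $(EC)^k = EC^k$. Hence they lie in $\Clj$, and $\Clj = E\Clj \oplus (P-E)\Clj$.

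\emph{Main obstacle.} The remaining step is to derive a contradiction when $0\ne E\ne P$. The plan is to produce a nontrivial idempotent $Q\in M_n(\Cb)$ with $Q\xi\in\Cla\xi$ for all $\xi$. By Lemma~\ref{lem:local_ref} and reflexivity, $Q\in\Cla = \Cb I+\Clj$. This is impossible, because an idempotent $\lambda I + N$ with $N$ nilpotent forces $\lambda\in\{0,1\}$ and then $N=0$. The natural candidate is $Q$ equal to $E$ on $\CK$, extended to $\Cb^n$ using the splitting of $\Clj$. For $\xi\in\CK$, the required relation $Q\xi\in\Cla\xi$ holds because $E$ reduces $P\Cla P$ and $\xi = E\xi + (P-E)\xi$, though one must check that $E\xi$ is reachable from $\xi$ within the algebra. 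The real difficulty is choosing $Q$ on $\CK^\perp$ so that $Q\xi\in\Cb\xi+\Clj\xi$ holds for vectors $\xi\notin\CK$. I would first try $Q\xi = \xi - (P-E)C_0\xi$-type corrections, built from elements of $(P-E)\Clj$, and verify the local condition directly. If that fails, I would instead argue that the decomposition $\Clj = E\Clj\oplus(P-E)\Clj$ makes $\Cb I + E\Clj + \Cb(\text{idempotent})$ fit inside $\Alg\Lat\Cla$.
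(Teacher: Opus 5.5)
\textbf{Verdict: there is a genuine gap.} Your first inclusion is the paper's argument. You get $TC\in\Cla$ from Lemma~\ref{lem:local_ref} and reflexivity, and commutativity of $\Cla$ then gives $BT=TB$ on $\CK$. Your setup for the second identity also matches the paper. You pass to a projection $E\le P$ commuting with $\Lat(P\Cla P)$, note that $E\in\Alg\Lat(P\Cla P)$, and conclude that $E$ commutes with $P\Cla P$, so $E\CK$ and $(P-E)\CK$ are $\Cla$-invariant. But the proposal stops exactly where the proof has to happen: the case $0\neq E\neq P$ is never ruled out. In its place you outline a plan to extend $E$ to a nontrivial idempotent in $\Alg\Lat\Cla$, leave the behaviour on $\CK^\perp$ open, and offer a fallback that is only gestured at. As written, the identity $(\Lat P\Cla P)'\cap PM_n(\Cb)P=\Cb P$ is not proved.

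\textbf{The missing idea.} The contradiction is already available inside $\CK$; you need neither the splitting $\Clj=E\Clj\oplus(P-E)\Clj$ nor anything on $\CK^\perp$.
\begin{itemize}
\item Since $E\CK\neq\{0\}$ is $\Cla$-invariant and $\Clj^m=\{0\}$, take $k\ge 0$ maximal with $\Clj^kE\CK\neq\{0\}$.
\item Any nonzero $\beta_1\in\Clj^kE\CK\subseteq E\CK$ then satisfies $\Clj\beta_1=\{0\}$.
\item In the same way, obtain $0\neq\beta_2\in(P-E)\CK$ with $\Clj\beta_2=\{0\}$.
\item Then $\Cla(\beta_1+\beta_2)=\Cb(\beta_1+\beta_2)$ is a line in $\Lat(P\Cla P)$.
\item But $E(\beta_1+\beta_2)=\beta_1\notin\Cb(\beta_1+\beta_2)$, because $\beta_1\perp\beta_2$ and both are nonzero. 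This contradicts $E\in\Alg\Lat(P\Cla P)$.
\end{itemize}

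\textbf{Where your plan went astray.} You wrote that $Q\xi\in\Cla\xi$ for $\xi\in\CK$ holds ``because $E$ reduces $P\Cla P$''. That misplaces the difficulty. Reducing alone does not give $E\xi\in\Cla\xi$; for instance, every projection commutes with $\Cb I$. What gives it is $E\in\Alg\Lat(P\Cla P)$ together with Lemma~\ref{lem:local_ref}. Testing precisely that condition at $\xi=\beta_1+\beta_2$ is what produces the contradiction. You flagged, but did not pursue, whether $E\xi$ is reachable from $\xi$ within the algebra. Pursuing it at such a vector is the entire remaining argument, and the construction of $Q$ on $\CK^\perp$ is a detour.
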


\begin{proof}
It is clear that $\CK$ is an invariant subspace of $\Cla$. Let $T \in \Alg\Lat(P \Cla P)$. For every $J  \in \Clj$ and $\xi \in \Cb^n$, $J \xi \in \CK$. By Lemma \ref{lem:local_ref},
\begin{align*}
 TJ \xi \in P\Cla P J \xi = \Cla J \xi \subseteq \Cla \xi, \qquad \forall \xi \in \Cb^n.
\end{align*}
Then Lemma \ref{lem:local_ref} implies that $TJ \in \Cla$. For every $A \in \Cla$, we have
\begin{align*}
 \bigl(TPAP- PAPT\bigr)J \xi = (TJA- ATJ)\xi=0, \qquad \forall \xi \in \Cb^n.
\end{align*}
Therefore $T \in (P \Cla P)' \cap P M_n(\Cb) P$. 

We now verify that $(\Lat P\Cla P)' \cap P M_n(\Cb) P = \Cb P$. Let $Q \in (\Lat P\Cla P)' \cap P M_n(\Cb) P$ be a projection. It suffices to show that $Q = 0$ or $Q = P$. The discusssion above ensures that $Q \in (P \Cla P)' \cap P M_n(\Cb) P$.

Assume that both $Q$ and $P - Q$ are nonzero. Then
\begin{align*}
  Q, P-Q \in \Lat \Cla.
\end{align*} 
We claim that for every nonzero $E \in \Lat(P\Cla P)$, there exists a nonzero vector $\beta \in E \Cb^n$ such that $\Clj \beta = \{0\}$. Indeed, let $k \ge 0$ be the integer such that 
\begin{align*}
  \Clj^k E \neq \{0\} \quad \text{and} \quad \Clj^{k+1} E = \{0\},
\end{align*}
with the convention that $\Clj^0 E = E$. Any nonzero vector $\beta \in \Clj^k E \Cb^n$ then satisfies the desired condtions. 

By the above claim, there exist nonzero vectors $\beta_1 \in Q \Cb^n$ and $\beta_2 \in (P - Q) \Cb^n$ such that 
\begin{align*}
 \Clj \beta_1 = \{0\}, \qquad \Clj \beta_2 = \{0\}.  
\end{align*}
The one-dimensional subspace $\Cb(\beta_1 + \beta_2)$ is invariant under $\Cla$ because
\begin{align*}
  \Clj (\beta_1 + \beta_2) = \Clj \beta_1 + \Clj \beta_2 = \{0\}.
\end{align*}
However, this contradicts the assumption that $Q \in \Alg \Lat(P \Cla P)$, since
\begin{align*}
  Q(\beta_1 + \beta_2) = \beta_1 \notin \Cb(\beta_1 + \beta_2).
\end{align*}
We thus conclude that $Q = 0$ or $Q = P$.
\end{proof}

\begin{lem}\label{lem:nil_not_KS}
Under the notation and assumptions of Lemma~\ref{lem:con_cut}, $\Cla$ is not a KS-algebra.
\end{lem}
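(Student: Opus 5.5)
The plan is to exhibit a reflexive algebra $\Clb \supsetneq \Cla$ with $\Clb\cap\Clb^* = \Cla\cap\Cla^*$, built as $\Alg\Cll$ for a suitable sublattice $\Cll \subseteq \Lat\Cla$. Since $\Alg$ reverses inclusions, $\Cll \subseteq \Lat \Cla$ and the reflexivity of $\Cla$ give $\Clb=\Alg\Cll \supseteq \Alg\Lat\Cla = \Cla$, and $\Clb$ is reflexive automatically. I first record that $\Cla\cap\Cla^* = \Cb I$. Indeed, a self-adjoint $X=cI+N\in\Cla$ with $N\in\Clj$ has $N = X - cI$ normal and nilpotent, hence $N=0$. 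The same argument applied to $X^*=X$ and to $iX$ handles arbitrary elements of $\Cla\cap\Cla^*$. Also $\CK\neq\Cb^n$ because $\Clj$ is nilpotent, and $\CK\ne\{0\}$ unless $\Clj=0$. The case $\Clj=0$ is excluded because then $\Cla=\Cb I$ is not reflexive for $n\ge 2$.

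The lattice I would use is
\begin{align*}
  \Cll := \Lat(P\Cla P) \;\cup\; \{F \subseteq \Cb^n : F \supseteq \CK\} \;\cup\; \{\Cla\xi : \xi \in \CK^\perp\}.
\end{align*}
Here $\Lat(P\Cla P)$ is regarded as a family of subspaces of $\CK$, and $\Cll$ is replaced by the lattice it generates if needed. Each member lies in $\Lat\Cla$: a subspace $E\subseteq\CK$ invariant under $P\Cla P$ is $\Cla$-invariant since $\Cla\CK\subseteq\CK$ gives $AE=PAPE$; every $F\supseteq \CK$ is invariant because $\Cla F\subseteq \Cb F+\CK$; and $\Cla\xi$ is obviously invariant.

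Next I would compute $\Clb=\Alg\Cll$ in block form relative to $\Cb^n=\CK\oplus\CK^\perp$.
\begin{itemize}
\item Invariance of $\CK+\Cb v$ for every $v\perp\CK$ forces $(I-P)T(I-P)=\lambda(I-P)$ for some scalar $\lambda$, and invariance of $\CK$ gives $(I-P)TP=0$.
\item Invariance of $\Lat(P\Cla P)$ gives $PTP\in\Alg\Lat(P\Cla P)$. By Lemma~\ref{lem:con_cut} this set lies in $(P\Cla P)'\cap PM_n(\Cb)P$.
\item Invariance of $\Cla\xi=\Cb\xi+\Clj\xi$ for $\xi\in\CK^\perp$ gives $T\xi\in \lambda\xi+\Clj\xi$. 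This constrains the corner $PT(I-P)$ only through $\Clj\xi$, leaving freedom there; but it also requires $\Clj\xi\subseteq\Cla\xi$ to be $T$-invariant, which ties $PTP$ to $\lambda$.
\end{itemize}

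For the diagonal, a $T\in\Clb\cap\Clb^*$ has vanishing off-diagonal corners, $PTP\in(\Lat P\Cla P)'\cap PM_n(\Cb)P=\Cb P$ by Lemma~\ref{lem:con_cut}, and $(I-P)T(I-P)=\lambda(I-P)$. The remaining task is to glue the two scalars, i.e.\ to show $PTP=\lambda P$. This is where the subspaces $\Cla\xi$ with $\xi\in\CK^\perp$ enter. For self-adjoint $T$ we get $T\xi=\lambda\xi$ and $T\Clj\xi\subseteq\Cla\xi\cap\CK$, so $T$ acts on the nonzero vector $J\xi\in\CK$ by $\mu=\lambda$. Such a nonzero $J\xi$ exists since $\CK\ne0$.

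To see $\Clb\ne\Cla$, I would use the second and third conditions above: they still permit operators that act as $0$ on $\CK$ and send $\CK^\perp$ into $\CK$ through maps other than elements of $\Clj$. Dimension counting, or a rank-one operator $\eta\otimes\xi$ with $\xi\perp\CK$ and $\eta\in\CK$ suitably chosen in $\Clj\xi$ independently of the $\Cla$-structure, should give an element not in $\Cla$.

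The main obstacle is balancing $\Cll$. It must be rich enough that the diagonal collapses to $\Cb I$, which is the scalar gluing via $\Cla\xi$, yet poor enough that $\Alg\Cll\ne\Cla$. If the cyclic subspaces $\Cla\xi$ over-constrain the corner $PT(I-P)$, I would try to keep only finitely many of them, or only $\Cla\xi_0$ for a single $\xi_0\in\CK^\perp$. One such subspace already suffices for the scalar gluing.
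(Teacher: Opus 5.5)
Your construction fails at the scalar-gluing step, and the failure is built into your choice of $\xi\in\CK^\perp$. For $\xi\in\CK^\perp$ you have $\Cla\xi=\Cb\xi\oplus\Clj\xi$ with $\Cb\xi\subseteq\CK^\perp$ and $\Clj\xi\subseteq\CK$, so $P(c\xi+J\xi)=J\xi\in\Cla\xi$. Thus every $\Cla\xi$ with $\xi\perp\CK$ reduces $P$. So does every subspace of $\CK$ and every subspace containing $\CK$, and therefore every member of the lattice they generate. Consequently $P\in\Alg\Cll\cap(\Alg\Cll)^*$. Since $\{0\}\neq\CK\neq\Cb^n$, the diagonal of your $\Clb$ strictly contains $\Cb I=\Cla\cap\Cla^*$, so $\Clb$ is not a witness. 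Concretely, $T=\mu P+\lambda(I-P)$ sends $c\xi+J\xi$ to $\lambda c\xi+\mu J\xi\in\Cla\xi$ for all $\mu,\lambda$. The condition $T\Clj\xi\subseteq\Cla\xi\cap\CK=\Clj\xi$ imposes no relation between $\mu$ and $\lambda$, so your claim that $T$ acts on $J\xi$ by $\lambda$ is false. Keeping only one $\xi_0\in\CK^\perp$ does not help, for the same reason.

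The missing idea is to choose $\xi$ so that $\Cla\xi$ does \emph{not} reduce $P$, and this is where reflexivity of $\Cla$ is really used. The only idempotents in $\Cb I+\Clj$ are $0$ and $I$, so $P\notin\Cla$. Lemma~\ref{lem:local_ref} then gives $\xi$ with $P\xi\notin\Cla\xi$. Such a $\xi$ has $P\xi\neq0\neq(I-P)\xi$, so it lies in neither $\CK$ nor $\CK^\perp$, and the projection $Q$ onto $\Cla\xi$ satisfies $PQ\neq QP$.

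The paper takes $\Cll=\{E\le P: E\in\Lat(P\Cla P)\}\cup\{P+F: F\le I-P\}\cup\{Q\}$. Your block analysis of the diagonal together with Lemma~\ref{lem:con_cut} gives $\Cll'\subseteq\{aI+bP: a,b\in\Cb\}$, and $PQ\neq QP$ forces $b=0$. This single $Q$ also makes strictness concrete rather than a hoped-for dimension count. Take $B=PB(I-P)$ with $B(I-P)\xi=P\xi$. The idempotent $B+(I-P)$ annihilates $\CK\supseteq\Clj\xi$, fixes $\xi$, and leaves the remaining members of $\Cll$ invariant. It is neither $0$ nor $I$, hence lies in $\Alg\Cll\setminus\Cla$.
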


\begin{proof}
If $\Clj = \{0\}$, let
\begin{align*}
   F = \frac{1}{n} \sum_{i,j} E_{ij},
\end{align*}
and define
\begin{align*}
  \Cll := \{E_{11}, E_{11} + E_{22}, \ldots, E_{11} + \cdots + E_{nn}, F\},
\end{align*}
where $\{E_{in}\}_{ij}$ are the canonical mattrix units of $M_n(\Cb)$. Then $\Alg \Cll$ is a reflexive algebra strictly containing $\mathbb{C} I$ with
\begin{align*}
(\Alg \Cll)^* \cap \Alg \Cll = \mathbb{C} I.
\end{align*}
In particular, $\Cla = \Cb I$ is not a KS-algebra.

Assume now that $\Clj \neq \{0\}$ and $\Cla$ is a KS-algebra. Note that $\CK \neq \Cb^n$. Then $I-P \neq 0$. Since $\Cla = \Cb I + \Clj$, $P \notin \Cla$. By Lemma \ref{lem:local_ref}, there exists a nonzero vector $\xi \in \Cb^n$ such that
\begin{align*}
  P \xi \notin \Cla \xi.
\end{align*}
In particular, $P \xi \neq 0$ and $(I-P) \xi \neq 0$ since $0, \xi \in \Cla \xi$. Let $Q$ be the projection onto $\Cla \xi$ and 
\begin{align*}
 \Cll :=\{E \leq P: E \in \Lat(P \Cla P) \} \cup \{P +  F:  F \leq I-P\} \cup \{Q\}.
\end{align*}
Since $P\CH = \Clj \CH$, $\Cll \subseteq \Lat \Cla$.

We claim that $\Cll' = \Cb I$. Indeed, let $T \in \Cll'$. By Lemma~\ref{lem:con_cut}, there exist $a, b \in \Cb$ such that 
\begin{align*}
  T = aI + bP.
\end{align*}
Since $PQ \neq QP$, we conclude that $T = aI$. Therefore
\begin{align*}
  \Cla^* \cap \Cla = \Cb I = (\Alg \Cll)^* \cap \Alg \Cll.
\end{align*}

It remains to show that 
\begin{align*}
  \Cla \subsetneq \Alg(\Cll),
\end{align*}
which in particular implies that $\Cla$ is not a KS-algebra. Let $B \in M_n(\Cb)$ satisfy
\begin{align*}
  B = PB(I-P), \quad \text{and} \quad B(I-P) \xi = P\xi.
\end{align*}
Then $B + (I-P)$ is an idempotent. Since 
\begin{align*}
  [B + (I-P)] \Clj = [B + (I-P)] P\Clj = \{0\},
\end{align*}
we have 
\begin{align*}
  [B + (I-P)] \Cla \xi = [B + (I-P)] (\Cb \xi + \Clj \xi) = [B + (I-P)] \Cb \xi \subseteq \Cla \xi,
\end{align*}
for the fixed vector $\xi$ chosen above. It is also straightforward to verify that 
\begin{align*}
  \{E \le P : E \in \Lat(P\Cla P)\} \cup \{P + F : F \le I-P\} \subseteq \Lat [B + (I-P)].
\end{align*}
Therefore, $B + (I-P) \in \Alg \Cll$. It is clear that the idempotent $B + (I-P)$ does not belong to $\Cla = \Cb I + \Clj$. This completes the proof of the lemma.
\end{proof}

\noindent\textbf{Proof of Theorem \ref{thm:no_ab_KS_3}}: Assume that $\Cla \subseteq M_n(\Cb)$ is an abelian KS-algebra. By \cite[Corollary~2.4]{Yuan14}, $\Cla = \Cb I + \Clj$, where $\Clj$ is an abelian algebra consisting entirely of nilpotent matrices. Since commuting complex matrices admit a common eigenvector, we may assume, up to unitary equivalence, that every element in $\Clj$ is strictly upper triangular. In particular, $\Clj^n = \{0\}$. By Lemma~\ref{lem:nil_not_KS}, this yields a contradiction, which proves that no abelian KS-algebra exists in $M_n(\Cb)$, $n \geq 3$.

\printbibliography
\end{document}